\documentclass[a4paper,10pt]{article}

\setlength\parindent{0pt}

\usepackage[T1]{fontenc}
\usepackage[latin1]{inputenc}
\usepackage{palatino, url, multicol}
\usepackage[sc]{mathpazo}
\usepackage{amsthm}
\usepackage{graphicx}
\usepackage{epsfig} 
\usepackage{amsmath}
\usepackage{latexsym}
\usepackage{amssymb}
\usepackage{amscd}
\usepackage{ifthen}
\usepackage{cite}
\usepackage{epstopdf}
\bibliographystyle{amsplain}
\newtheorem{theorem}{Theorem}[section]

\newtheorem{proposition}{Proposition}[section]
\newtheorem{lemma}{Lemma}
\theoremstyle{definition}

\theoremstyle{remark}

\title{On gliding Lagrange top equations and their asymptotic behaviour}
\begin{document}

\author{
	Nils Rutstam\\
  Stefan Rauch-Wojciechowski\\
	Linköping University, Linköping, Sweden
  }

\maketitle

\bigskip

{\abstract The dynamical equations for a gliding Lagrange top are not integrable. They have 5 dynamical variables and admit one integral of motion. We show that all solutions go to one of the two vertical spinning solutions and determine conditions of their stability. This means that solutions starting close to either of the spinning solutions go asymptotically to this solution.}

\medskip 

\noindent{\it Key words: Lagrange top; rigid body; nonholonomic mechanics; asymptotics of solutions.}

\section{Equations of motion for the gliding Lagrange top}

We study motion of a spinning and gliding Lagrange top (gLT) of mass $m$ under action of the gravitational force $-mg\hat{z}$ and subjected to a constraint allowing the bottom tip $A$ to glide in a horizontal plane of support. The top may spin above the plane, under the plane and may cross the plane during its motion. Equations for the gliding Lagrange top have been studied in \cite{Nisse2} as a limiting case of equations for the Tippe Top.

For describing motion of the top we use three right-handed reference frames as in Fig 1 \ref{gLT_diagram}. Here $\mathbf{K}_0$ is an inertial frame, $\mathbf{K}=(\mathbf{\hat{1}},\mathbf{\hat{2}},\mathbf{\hat{3}})$ is a (partialy) body fixed frame with origin placed at the centre of mass ($CM$) and having the axis $\mathbf{\hat{3}}$ aligned along the symmetry axis of the top. The frame $\mathbf{\tilde{K}}=(\hat{x},\hat{y},\hat{z})$ has origin at the contact point $A$ and with $\hat{x}$, $\hat{y}$ aligned with the plane of constraint. The $\hat{x}$-axis stays in the vertical plane of ($\mathbf{\hat{1}}$,$\mathbf{\hat{3}}$) and the axis $\mathbf{\hat{2}}$ is parallel to $\hat{y}$. This means that $\mathbf{\tilde{K}}$ is rotating about the vertical axis $\hat{z}$ with angular velocity $\dot{\varphi}$ where $(\theta,\varphi,\psi)$ denote the standard Euler angles describing rotation of the top w.r.t. the inertial reference frame $\mathbf{K}_0$. We let $\boldsymbol{\omega}$ be the angular velocity vector in the $\mathbf{K}$-frame and $\mathbf{L}=\mathbb{I}\boldsymbol{\omega}=I_1\omega_1\mathbf{\hat{1}}+I_1\omega_2\mathbf{\hat{2}}+I_3\omega_3\mathbf{\hat{3}}$ is the angular momentum (due to axial symmetry, we have $I_1=I_2$ for the moments of inertia).
\begin{figure}[ht]
\begin{center}
\includegraphics[scale=0.90]{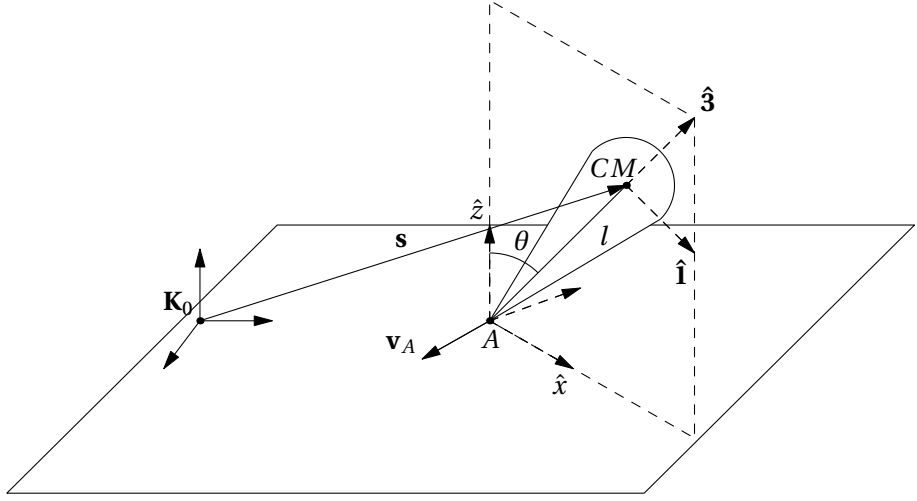}
\caption{Diagram of the gliding Lagrange Top.\label{gLT_diagram}}
\end{center}
\end{figure}
The equations of motion for gLT consist of an equation for the motion of the centre of mass $CM$, an equation for rotation about $CM$ and, due to axial symmetry, of one kinematic equation for motion of the symmetry axis $\mathbf{\hat{3}}$. They are
\begin{equation}
\label{gLT_EoM}m\mathbf{\ddot{s}}=\mathbf{F}-mg\hat{z},\quad \mathbf{\dot{L}}=\mathbf{a\times F},\quad \mathbf{\dot{\mathbf{\hat{3}}}}=\boldsymbol{\omega}\times\mathbf{\hat{3}}=\frac{1}{I_1}(\mathbf{L}\times\mathbf{\hat{3}}),
\end{equation}  
where the dot denotes time-derivative, $\mathbf{s}$ is the position of $CM$ w.r.t. the inertial frame $\mathbf{K}_0$, $\mathbf{a}=-l\mathbf{\hat{3}}$ points from $CM$ to the point of support $A$, $-mg\hat{z}$ is the gravitational force acting at $CM$ and $\mathbf{F}=\mathbf{F}_{\text{f}}+\mathbf{F}_{\text{R}}=-\mu g_n(t)\mathbf{v}_A+g_n(t)\hat{z}$ is the force acting at $A$.
This last force consist of a reaction force $\mathbf{F}_{\text{R}}=g_n(t)\hat{z}$ and a frictional force $\mathbf{F}_{\text{f}}=-\mu g_n(t)\mathbf{v}_A$ acting against the direction of the gliding velocity $\mathbf{v}_A=\mathbf{\dot{s}}+\boldsymbol{\omega}\times\mathbf{a}$. The friction coefficient $\mu(\mathbf{s},\mathbf{\dot{s}},\mathbf{L},\mathbf{\hat{3}},t)>0$ may depend on all dynamical variables and time.

For the classical Lagrange top (LT), for which $A$ is fixed, $\mathbf{v}_A=0$, $\mathbf{\dot{s}}=-\boldsymbol{\omega}\times\mathbf{a}$ and equations of motion are usually given for the angular momentum w.r.t. the point of support $A$. So then $\mathbf{L}_A=\mathbf{L}+m\boldsymbol{\omega}\times(\boldsymbol{\omega}\times\mathbf{a})$ as follows from the Steiner formula. The derivative is $\frac{d}{dt}\mathbf{L}_{A}=\mathbf{a}\times(mg\hat{z})+m(\mathbf{a}\times\mathbf{\dot{v}}_A)$ since $\mathbf{v}_A=\mathbf{\dot{s}}+\boldsymbol{\omega}\times\mathbf{a}$.

The constraint keeping the tip $A$ in the plane means that $(\mathbf{s}(t)+\mathbf{a}(t))\cdot\hat{z}=0$ and that the gliding velocity stays in the plane of support, i.e. $0=(\mathbf{\dot{s}}+\boldsymbol{\omega}\times\mathbf{a})\cdot\hat{z}=\mathbf{v}_{A}\cdot\hat{z}$. The second derivative of the constraint $0=\hat{z}\cdot\frac{d^2}{dt^2}(\mathbf{s+a})=\hat{z}\cdot\left[\mathbf{\ddot{s}}-\frac{1}{I_1}\frac{d}{dt}(\mathbf{L}\times\mathbf{\hat{3}})\right]$ determines the value of the vertical reaction force 
\begin{equation}
\label{gn_gLT}g_n(t)=\frac{mgI_{1}^2+ml(\mathbf{L}\cdot\mathbf{\hat{3}}\mathbf{L}\cdot\hat{z}-\mathbf{\hat{3}}\cdot\hat{z}\mathbf{L}^2)}{I_{1}^2+ml^2I_1(1-(\mathbf{\hat{3}}\cdot\hat{z})^2)+ml^2I_1 \mu\mathbf{\hat{3}}\cdot\hat{z}\mathbf{v}_{A}\cdot\mathbf{\hat{3}}}.
\end{equation}
If we denote $\mathbf{s}=(\mathbf{r},s_{\hat{z}})$ then $s_{\hat{z}}=l\mathbf{\hat{3}}\cdot\hat{z}$, $\dot{s}_{\hat{z}}=l\mathbf{\dot{\mathbf{\hat{3}}}}\cdot\hat{z}=l(\boldsymbol{\omega}\times\mathbf{\hat{3}})\cdot\hat{z}$ are determined and the equation $m\mathbf{\ddot{s}}=\mathbf{F}-mg\hat{z}$ is reduced to $m\mathbf{\ddot{r}}=-\mu g_n(t)\mathbf{v}_A$. 

The system \eqref{gLT_EoM} inherits from LT an integral of motion $\mathbf{L}\cdot\mathbf{\hat{3}}=L_3$ as
\begin{equation*}
\frac{d}{dt}\left(\mathbf{L}\cdot\mathbf{\hat{3}}\right)=\mathbf{\dot{L}}\cdot\mathbf{\hat{3}}+\mathbf{L}\cdot\mathbf{\dot{\mathbf{\hat{3}}}}=(\mathbf{a}\times\mathbf{F})\cdot\mathbf{\hat{3}}+\frac{1}{I_1}\mathbf{L}\cdot(\mathbf{L}\times\mathbf{\hat{3}})=0.
\end{equation*}
The total energy $E=\frac{1}{2}m\mathbf{\dot{s}}^2+\frac{1}{2}\boldsymbol{\omega}\cdot\mathbf{L}+mg\mathbf{s}\cdot\hat{z}$ is not conserved but it is a monotonously decreasing function of time since 
\begin{align*}
\dot{E}=&m\mathbf{\dot{s}}\cdot\mathbf{\ddot{s}}+\boldsymbol{\omega}\cdot\mathbf{\dot{L}}+mg\mathbf{\dot{s}}\cdot\hat{z}=(\mathbf{v}_A-\boldsymbol{\omega}\times\mathbf{a})\cdot\mathbf{F}-mg\mathbf{\dot{s}}\cdot\hat{z}+\boldsymbol{\omega}\cdot(\mathbf{a}\times\mathbf{F})+mg\mathbf{\dot{s}}\cdot\hat{z}\nonumber\\
=&\mathbf{v}_{A}\cdot\mathbf{F}-\mathbf{F}\cdot(\boldsymbol{\omega}\times\mathbf{a})+\boldsymbol{\omega}\cdot(\mathbf{a}\times\mathbf{F})=\mathbf{F}\cdot\mathbf{v}_{A}=-\mu g_n|\mathbf{v}_A|^2
\end{align*}
when we consider mechanical solutions with a positive reaction force.

The projection of $\mathbf{L}_A$ and $\mathbf{L}$ onto the $\hat{z}$-axis is no longer an integral of motion as
\begin{align*}
\frac{d}{dt}(\mathbf{L}_A\cdot\hat{z})&=m(\mathbf{a}\times\mathbf{\dot{v}}_A)\cdot\hat{z},\\
\frac{d}{dt}(\mathbf{L}\cdot\hat{z})&=m(\mathbf{a}\times\mathbf{\dot{v}}_A)\cdot\hat{z}-ml^2\left(\mathbf{\hat{3}}\times\frac{d}{dt}(\boldsymbol{\omega}\times\mathbf{\hat{3}})\right)\cdot\hat{z}.
\end{align*}
The system \eqref{gLT_EoM} thus admits only one integral of motion, it dissipates energy and it is not integrable.

\section{Equations in Euler angles} 

As has been stated, the orientation of the gLT with respect to $\mathbf{K}$ is described by the three angles $(\theta,\varphi,\psi)$, which are functions of time. The angle $\theta$ is the inclination of the symmetry axis $\mathbf{\hat{3}}$ w.r.t. $\hat{z}$, $\varphi$ is the rotation angle around the $\hat{z}$-axis and $\psi$ is the rotation around the $\mathbf{\hat{3}}$-axis.

The angular velocity of the reference frame $(\mathbf{\hat{1}},\mathbf{\hat{2}},\mathbf{\hat{3}})$ with respect to $\mathbf{K}$ is
\begin{equation*}
\boldsymbol{\omega}_{\textrm{ref}}=-\dot{\varphi}\sin\theta\mathbf{\hat{1}}+\dot{\theta}\mathbf{\hat{2}}+\dot{\varphi}\cos\theta\mathbf{\hat{3}}.
\end{equation*}
The total angular velocity of the gLT is found by adding the rotation around the symmetry axis $\mathbf{\hat{3}}$: 
\begin{equation*}
\boldsymbol{\omega}=\boldsymbol{\omega}_{\textrm{ref}}+\dot{\psi}\mathbf{\hat{3}}=-\dot{\varphi}\sin\theta\mathbf{\hat{1}}+\dot{\theta}\mathbf{\hat{2}}+(\dot{\psi}+\dot{\varphi}\cos\theta)\mathbf{\hat{3}}.
\end{equation*}
We shall refer to the third component of this vector as $\omega_3=\dot{\psi}+\dot{\varphi}\cos\theta$. The kinematic equations giving the rotation of the axes $(\mathbf{\hat{1}},\mathbf{\hat{2}},\mathbf{\hat{3}})$ will then be
\begin{equation*}
\begin{array}{lll}&\mathbf{\dot{\hat{1}}}=\boldsymbol{\omega}_{\textrm{ref}}\times\mathbf{\hat{1}}=\dot{\varphi}\cos\theta\mathbf{\hat{2}}-\dot{\theta}\mathbf{\hat{3}},\\
&\mathbf{\dot{\hat{2}}}=\boldsymbol{\omega}_{\textrm{ref}}\times\mathbf{\hat{2}}=-\dot{\varphi}\cos\theta\mathbf{\hat{1}}-\dot{\varphi}\sin\theta\mathbf{\hat{3}},\\
&\mathbf{\dot{\mathbf{\hat{3}}}}=\boldsymbol{\omega}_{\textrm{ref}}\times\mathbf{\hat{3}}=\boldsymbol{\omega}\times\mathbf{\hat{3}}=\dot{\theta}\mathbf{\hat{1}}+\dot{\varphi}\sin\theta\mathbf{\hat{2}}.\end{array}
\end{equation*}
In this notation we can rewrite the reduced equations of motion
\begin{equation*}
\begin{array}{ll}\left(\mathbb{I}\boldsymbol{\omega}\right)^{\mathbf{\dot{}}}=\mathbf{a}\times\left(g_n\hat{z}-\mu g_n\mathbf{v}_A\right),\\ m\mathbf{\ddot{r}}=-\mu g_n\mathbf{v}_A,\end{array}
\end{equation*}
using the Euler angles. We only need to add that the vertical axis is written as $\hat{z}=-\sin\theta\mathbf{\hat{1}}+\cos\theta\mathbf{\hat{3}}$ and that the velocity of the point of support is then $\mathbf{v}_{A}=\nu_x\cos\theta\mathbf{\hat{1}}+\nu_y \mathbf{\hat{2}}+\nu_x\sin\theta\mathbf{\hat{3}}$, where $\nu_x, \nu_y$ are components in the $\hat{x}=\mathbf{\hat{2}}\times\hat{z}$ and $\hat{y}=\mathbf{\hat{2}}$ direction (note here that $\hat{z}\cdot\mathbf{v}_A=0$ as expected). So substituting $\boldsymbol{\omega},\mathbf{v}_A$ and $\hat{z}$ in the equations above with their Euler angle form, differentiating and separating for each component leads us to equations of motion expressed in Euler angles.

We will rewrite equations of motion \eqref{gLT_EoM} in coordinate form in terms of Euler angles.
This formulation has the advantage of being independent of the reference point, be it the $CM$ or the supporting point $A$.
By solving this system for the functions $(\ddot{\theta},\ddot{\varphi},\dot{\omega}_3,\dot{\nu}_x,\dot{\nu}_y)$ we get:
\begin{align}
\label{gLT_ddth}\ddot{\theta}=&\frac{1}{I_1}\left(I_1\dot{\varphi}^2\sin\theta\cos\theta-I_3\omega_3\dot{\varphi}\sin\theta+l\mu g_n\nu_x\cos\theta+lg_n\sin\theta\right)\\
\label{gLT_ddph}\ddot{\varphi}=&\frac{1}{I_1\sin\theta}\left(I_3\omega_3\dot{\theta}-2I_1\dot{\theta}\dot{\varphi}\cos\theta+l\mu g_n\nu_y\right),\\
\label{gLT_dom}\dot{\omega}_3=&0,\\
\label{gLT_dvx}\dot{\nu}_x=&\frac{l\sin\theta}{I_1}\left(I_3\omega_3\dot{\varphi}\cos\theta+I_1(\dot{\theta}^2+\dot{\varphi}^2\sin^2\theta)-lg_n\cos\theta\right)\nonumber\\
&-\frac{\mu g_n\nu_x}{mI_1}\left(I_1+ml^2\cos^2\theta\right)+\nu_y\dot{\varphi},\\
\label{gLT_dvy}\dot{\nu}_y=&-\frac{lI_3\omega_3\dot{\theta}}{I_1}-\frac{I_{1}^{*}}{mI_1}\mu g_n\nu_y-\nu_x\dot{\varphi}.
\end{align}
The equation for $g_n$ \eqref{gn_gLT} becomes
\begin{equation*}
g_n(t)=\frac{mgI_1-ml\left[I_1\cos\theta(\dot{\theta}^2+\dot{\varphi}^2\sin^2\theta)-I_3\omega_3\dot{\varphi}\sin^2\theta\right]}{I_1+ml^2\sin^2\theta+ml^2\mu\nu_x\sin\theta\cos\theta}.
\end{equation*} 
The equation $\dot{\omega}_3=0$ says that $L_\mathbf{\hat{3}}=I_3\omega_3$ is an integral of motion, but $\mathbf{L}\cdot\hat{z}=I_{1}\dot{\varphi}\sin^2\theta+I_3\omega_3\cos\theta$ is not as
\begin{align*}
\frac{d}{dt}\left(\mathbf{L}\cdot\hat{z}\right)=&\frac{d}{dt}\left(I_{1}\dot{\varphi}\sin^2\theta+I_3\omega_3\cos\theta\right)=I_1\ddot{\varphi}\sin^2\theta+2I_1\dot{\varphi}\dot{\theta}\sin\theta\cos\theta-I_3\omega_3\dot{\theta}\sin\theta\nonumber\\
&=l\mu g_n\nu_y\sin\theta.
\end{align*}
The energy for the gliding LT is
\begin{align*}
E=&\frac{1}{2}m\mathbf{\dot{s}}^2+\frac{1}{2}\boldsymbol{\omega}\cdot\mathbf{L}+mg\mathbf{s}\cdot\hat{z}\nonumber\\
=&\frac{1}{2}m\mathbf{v}_{A}^2+ml\mathbf{v}_A\cdot(\boldsymbol{\omega}\times\mathbf{\hat{3}})+\frac{1}{2}ml^2(\boldsymbol{\omega}\times\mathbf{\hat{3}})^2+\frac{1}{2}\boldsymbol{\omega}\cdot\mathbf{L}+mg\mathbf{s}\cdot\hat{z}\nonumber\\
=&\frac{1}{2}m(\nu_{x}^2+\nu_{y}^2)+ml(\nu_{x}\dot{\theta}\cos\theta+\nu_{y}\dot{\varphi}\sin\theta)+\frac{1}{2}(I_{1}^{*}(\dot{\theta}^2+\dot{\varphi}^2\sin^2\theta)+I_3\omega_{3}^2)+mgl\cos\theta.
\end{align*}
Its derivative gives $\dot{E}=\mathbf{F}\cdot\mathbf{v}_A$ when we use equations \eqref{gLT_ddth}--\eqref{gLT_dvy} and the second derivative $\frac{g_n}{m}-g+l(\ddot{\theta}\sin\theta+\dot{\theta}^2\cos\theta)=0$ of the contact criterion.

\section{Asymptotic solutions to gliding Lagrange top}

The dynamical system of equations \eqref{gLT_ddth}--\eqref{gLT_dvy} is nonintegrable and the only solutions that can be directly seen by inspection are $\mathbf{y}_{0,\pi}=(\theta=0,\pi;\dot{\theta}=0,\dot{\varphi}=\text{const},\omega_3=\text{const},\nu_x=0,\nu_y=0)$. These solutions are, as we shall show below, asymptotic solutions to gLT in the sense of the LaSalle theorem.

For an autonomous system in a domain $D\subset\mathbb{R}^n$
\begin{equation}  
\label{dyn_sys}\mathbf{\dot{y}}=\mathbf{Y(y)},
\end{equation}
where $\mathbf{Y}:D\to\mathbb{R}^n$ is a \emph{continuous, Lipschitz map}, a set $M\subset\mathbb{R}^n$ is positively invariant w.r.t. \eqref{dyn_sys} if $\mathbf{y}(0)\in M$ implies that the solution $\mathbf{y}(t)\in M$ for all $t\geq 0$. The LaSalle theorem \cite{LaS,Kha} states

\begin{theorem}[LaSalle]
Suppose $\Omega\subset D$ is compact and positively invariant set for \eqref{dyn_sys}. Let $V$ be a scalar $C^{1}$-function on $D$ and suppose $\dot{V}(\mathbf{y})\leq 0$ in $\Omega$. Let $B$ be the largest invariant set in $M=\{\mathbf{y}\in\Omega:\;\dot{V}(\mathbf{y})=0\}$. Then every solution starting in $\Omega$ approaches $B$ as $t\to\infty$. 
\end{theorem}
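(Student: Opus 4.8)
The plan is to argue through $\omega$-limit sets, the standard route for LaSalle-type invariance principles. First I fix a solution $\mathbf{y}(t)$ of \eqref{dyn_sys} with $\mathbf{y}(0)\in\Omega$. Positive invariance gives $\mathbf{y}(t)\in\Omega$ for all $t\geq 0$, and since $\Omega$ is compact the forward orbit is bounded, so the solution is defined for all $t\geq 0$. Note that $\dot V(\mathbf{y}):=\nabla V(\mathbf{y})\cdot\mathbf{Y}(\mathbf{y})$ is continuous on $D$ because $V\in C^1$ and $\mathbf{Y}$ is continuous, and along the solution $\frac{d}{dt}V(\mathbf{y}(t))=\dot V(\mathbf{y}(t))\leq 0$, so $t\mapsto V(\mathbf{y}(t))$ is nonincreasing. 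It is bounded below since $V$ is continuous on the compact set $\Omega$; hence $V(\mathbf{y}(t))\to c$ as $t\to\infty$ for some constant $c\in\mathbb{R}$.

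Next I bring in the $\omega$-limit set $\omega(\mathbf{y})=\bigcap_{\tau\geq 0}\overline{\{\mathbf{y}(t):t\geq\tau\}}$ and its standard properties for a precompact forward orbit: it is nonempty, compact, contained in $\Omega$ (as $\Omega$ is closed), invariant under the flow of \eqref{dyn_sys}, and $\operatorname{dist}(\mathbf{y}(t),\omega(\mathbf{y}))\to 0$ as $t\to\infty$. These are classical facts (see \cite{Kha}); their proofs use exactly the hypotheses in play — Lipschitz continuity of $\mathbf{Y}$ for uniqueness of solutions, so that the limit of a convergent sequence $\mathbf{y}(t_n)\to p$ extends to the whole trajectory through $p$ and invariance follows, and compactness of $\Omega$ for precompactness of the orbit and the distance statement via a contradiction argument.

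The key step is then short. For any $p\in\omega(\mathbf{y})$ pick $t_n\to\infty$ with $\mathbf{y}(t_n)\to p$; continuity of $V$ gives $V(p)=\lim_n V(\mathbf{y}(t_n))=c$, so $V\equiv c$ on $\omega(\mathbf{y})$. Since $\omega(\mathbf{y})$ is invariant, the solution through any $p\in\omega(\mathbf{y})$ stays in $\omega(\mathbf{y})$, where $V$ is the constant $c$, so $\dot V$ vanishes along it; therefore $\omega(\mathbf{y})\subset M=\{\mathbf{y}\in\Omega:\dot V(\mathbf{y})=0\}$. Being invariant and contained in $M$, the set $\omega(\mathbf{y})$ is contained in the largest invariant set $B\subset M$. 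Finally, from $\operatorname{dist}(\mathbf{y}(t),\omega(\mathbf{y}))\to 0$ and $\omega(\mathbf{y})\subset B$ we get $\operatorname{dist}(\mathbf{y}(t),B)\to 0$, i.e. every solution starting in $\Omega$ approaches $B$ as $t\to\infty$.

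The main obstacle is not an ingenious idea but the careful verification of the structural properties of $\omega(\mathbf{y})$ — especially invariance and the convergence $\operatorname{dist}(\mathbf{y}(t),\omega(\mathbf{y}))\to 0$ — which is precisely where the assumptions "$\mathbf{Y}$ continuous and Lipschitz" and "$\Omega$ compact positively invariant" are consumed. I would either cite these from \cite{Kha} or package them as a brief self-contained lemma before the main argument, keeping the proof of the theorem itself to the three steps above.
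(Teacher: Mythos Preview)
The paper does not actually prove this theorem; it is stated as a cited result from \cite{LaS,Kha} and then applied. Your argument via $\omega$-limit sets is the standard correct proof, and in fact the paper reproduces essentially the same steps later, in the proof of Proposition~3.1, for the specific choice $V=E$: it takes the positive limit set $L_{\mathbf{\tilde y}}^{+}$, uses that $E$ is decreasing and bounded on the compact $\Omega$ to get a limit value $a$, cites \cite{Kha} for nonemptiness, compactness and invariance of $L_{\mathbf{\tilde y}}^{+}$, concludes $E\equiv a$ and hence $\dot E=0$ on $L_{\mathbf{\tilde y}}^{+}$, and then infers $L_{\mathbf{\tilde y}}^{+}\subset B$. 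So your proposal matches both the standard proof and the way the paper itself deploys the argument.
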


For the gLT the energy is a good LaSalle function since $\dot{E}=\mathbf{F}\cdot\mathbf{v}_A=-\mu g_n |\mathbf{v}_{A}|^2\leq 0$ whenever $g_n\geq 0$. 

We consider then equations \eqref{gLT_EoM} defined as a dynamical system for $\mathbf{y}=(\mathbf{\dot{r}},\mathbf{L},\mathbf{\hat{3}})\subset D\subset\mathbb{R}^2\times\mathbb{R}^3\times S^2$. The required asymptotic set is then defined as $M=\{\mathbf{y}=(\mathbf{\dot{r}},\mathbf{L},\mathbf{\hat{3}}):\; \dot{E}=-\mu g_n|\mathbf{v}_A|^2=0\}$. To $M$ belong solutions defined by the condition $\mathbf{v}_A=0$ and, possibly, solutions satisfying $g_n=0$. As we show below, there are no solutions satisfying $g_n(t)=0$.

The definition of the set $M=\{\mathbf{y}=(\mathbf{\dot{r}},\mathbf{L},\mathbf{\hat{3}}):\; \dot{E}=-\mu g_n|\mathbf{v}_A|^2=0\}$ corresponds well to the usual mechanical intepretation of asymptotic solutions understood as frictionless solutions without loss of energy. 

When $g_n\geq 0$ the same energy function defines a positively invariant compact set $\Omega\subset D$ as
\begin{equation*}
\Omega=\left\{(\mathbf{\dot{r}},\mathbf{L},\mathbf{\hat{3}}):E(\mathbf{\dot{r}},\mathbf{L},\mathbf{\hat{3}})=\frac{1}{2}\mathbf{\dot{r}}^2+\frac{1}{2}\left(\frac{1}{I_1}(\mathbf{L}\times\mathbf{\hat{3}})\cdot\hat{z}\right)^2+\frac{1}{2}\boldsymbol{\omega}\cdot\mathbf{L}+mg\mathbf{s}\cdot\hat{z}\leq E(\mathbf{y}(0))+2mgl\right\}.
\end{equation*}
From the constraint $(\mathbf{s}-l\mathbf{\hat{3}})\cdot\hat{z}=0$ we have $-l\leq\mathbf{s}\cdot\hat{z}\leq l$ and the energy is bounded from above and from below. To ensure applicability of the LaSalle arguments we shall need to assume that we consider solutions with nonnegative reaction force $g_n\hat{z}\geq 0$. This is important because, for small $\dot{\varphi}$, $\nu_x$, small angles $\theta$ and sufficiently large $\dot{\theta}$, the numerator in formula \eqref{gn_gLT} for $g_n$ can become negative $gI_1-l(I_1\cos\theta(\dot{\theta}^2+\dot{\varphi}^2\sin^2\theta)-I_3\omega_3\dot{\varphi}\sin^2\theta)<0$. The assumption is also necessary since solutions with initially positive $g_n(0)\geq 0$ may, in principle, acquire negative values of $g_n(t)$ at some later time.

\begin{lemma}
When $g_n(t)>0$ the only asymptotic solutions of the gLT equations \eqref{gLT_EoM} with $\mathbf{v}_A=0$ are the upright and the upside-down spinning solutions
\begin{equation*}
\mathbf{y}_{0,\pi}=(\theta=0,\pi; \dot{\theta}=0,\dot{\varphi}=\text{const},\omega_3=\text{const},\nu_x=0,\nu_y=0).
\end{equation*}
\end{lemma}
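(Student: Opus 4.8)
The plan is to use that a solution lying in the asymptotic set $B$ must satisfy $\mathbf v_A(t)\equiv 0$, i.e.\ $\nu_x(t)\equiv\nu_y(t)\equiv 0$, along its whole trajectory: since $g_n>0$ the component $\{g_n=0\}$ of $M$ is empty, so $M=\{\mathbf v_A=0\}$. Substituting $\nu_x\equiv\nu_y\equiv 0$ into \eqref{gLT_dvx}--\eqref{gLT_dvy} and demanding that the left-hand sides vanish identically gives the two algebraic relations
\[
\sin\theta\,(\,I_3\omega_3\dot\varphi\cos\theta+I_1(\dot\theta^2+\dot\varphi^2\sin^2\theta)-lg_n\cos\theta\,)=0,\qquad I_3\omega_3\dot\theta=0,
\]
which must hold at every point of the trajectory, together with \eqref{gLT_ddth}--\eqref{gLT_dom} restricted to $\nu_x=\nu_y=0$. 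Since $\omega_3$ is constant by \eqref{gLT_dom}, the second relation splits the problem into the branches $\omega_3\neq 0$ and $\omega_3=0$. (Geometrically, $\mathbf v_A\equiv 0$ means the contact point $A$ is at rest for all time, so the top moves as a classical Lagrange top whose only pivot reaction is the vertical force $g_n\hat z$; the content of the lemma is that this horizontal pivot reaction can vanish along an LT motion only for the vertical spins.)

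In the branch $\omega_3\neq 0$ we get $\dot\theta\equiv 0$, hence $\theta\equiv\theta_0$ and $\ddot\theta=0$. If $\sin\theta_0=0$ we are done, so suppose $\sin\theta_0\neq 0$; then \eqref{gLT_ddph} gives $\ddot\varphi=0$, so $\dot\varphi$ is constant, and \eqref{gLT_ddth} together with the first relation above become two polynomial identities relating $\dot\varphi$, $\omega_3$, $g_n$ and $\cos\theta_0$. Eliminating $g_n$ between them and using $\sin^2\theta_0+\cos^2\theta_0=1$ collapses the expression to $I_1\dot\varphi^2=0$; then $\dot\varphi=0$ and \eqref{gLT_ddth} gives $lg_n\sin\theta_0=0$, i.e.\ $g_n=0$, contradicting $g_n>0$. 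Hence $\sin\theta_0=0$.

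In the branch $\omega_3=0$ the second relation is vacuous, so I would argue on any interval on which $\sin\theta\neq 0$. There the first relation reads $I_1(\dot\theta^2+\dot\varphi^2\sin^2\theta)=lg_n\cos\theta$; substituting this into the expression for $g_n$ makes it the constant $g_n=\dfrac{mgI_1}{I_1+ml^2}$, so with $c:=\dfrac{lg_n}{I_1}=\dfrac{mgl}{I_1+ml^2}>0$ the relation becomes $\dot\theta^2+\dot\varphi^2\sin^2\theta=c\cos\theta$. Differentiating this in $t$ and substituting $\ddot\theta$, $\ddot\varphi$ from \eqref{gLT_ddth}--\eqref{gLT_ddph} (with $\omega_3=0$, $\nu_x=\nu_y=0$), the $\dot\varphi^2$-terms cancel and the identity reduces to $3c\,\dot\theta\sin\theta=0$, hence $\dot\theta\equiv 0$ on the interval. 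Then $\theta\equiv\theta_0$ with $\ddot\theta=0$ forces $\dot\varphi^2\cos\theta_0=-c$ from \eqref{gLT_ddth}, while the relation itself gives $\dot\varphi^2\sin^2\theta_0=c\cos\theta_0$; eliminating $\dot\varphi^2$ yields $\sin^2\theta_0+\cos^2\theta_0=0$, which is impossible. So no such interval exists and $\sin\theta\equiv 0$ here as well.

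With $\sin\theta\equiv 0$ throughout we get $\hat{\mathbf 3}\equiv\pm\hat z$, hence $\dot{\hat{\mathbf 3}}=\tfrac1{I_1}\mathbf L\times\hat{\mathbf 3}=0$, so $\mathbf L=L_3\hat{\mathbf 3}$ is constant, $\dot{\mathbf L}=\mathbf a\times(g_n\hat z)=0$, and from $\mathbf v_A=\dot{\mathbf s}=(\dot{\mathbf r},0)\equiv 0$ also $\dot{\mathbf r}=0$; in the variables $(\dot{\mathbf r},\mathbf L,\hat{\mathbf 3})$ this is exactly the fixed point $\mathbf y_{0,\pi}$. I expect the main obstacle to be the branch $\omega_3=0$: there one must differentiate the constraint $\dot\nu_x=0$ a second time and verify the (not obvious) cancellation that leaves $3c\,\dot\theta\sin\theta=0$; some care is also needed with the Euler-angle singularity at $\theta=0,\pi$ and with intervals on which $\sin\theta$ vanishes only at isolated times, both handled by continuity of the solution.
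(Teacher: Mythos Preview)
Your proof is correct and follows the same overall architecture as the paper: derive the two constraint identities from $\dot\nu_x=\dot\nu_y=0$, split on whether $\omega_3$ vanishes, reduce to $\dot\theta=0$, and then eliminate to reach $g_n\sin\theta=0$. The only genuine difference is in the branch $\omega_3=0$. The paper shows that $(\dot\theta^2+\dot\varphi^2\sin^2\theta)\cos^2\theta$ is conserved, then eliminates $g_n$ to obtain a fixed polynomial equation for $\cos\theta$ and concludes $\cos\theta$ is constant by continuity (finitely many roots). You instead observe directly that the constraint forces $g_n=\dfrac{mgI_1}{I_1+ml^2}$ to be constant, differentiate $\dot\theta^2+\dot\varphi^2\sin^2\theta=c\cos\theta$ once, and read off $3c\,\dot\theta\sin\theta=0$. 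Your route is shorter and avoids the slightly delicate ``polynomial with constant coefficients $\Rightarrow$ $\cos\theta$ constant'' step; the paper's route, on the other hand, unifies the final contradiction for both branches after establishing $\dot\theta=0$. Either way the conclusion is the same, and your handling of the endgame ($\sin\theta\equiv 0\Rightarrow\hat{\mathbf 3}=\pm\hat z$, $\mathbf L=L_3\hat{\mathbf 3}$, $\dot{\mathbf r}=0$) and of the Euler-angle singularity by passing to the vector variables is fine.
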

\begin{proof}
Asymptotic solutions $\mathbf{v}_A=0$ satisfy 
\begin{equation}
\label{EOM_gLT_red}\begin{array}{l}\left(\mathbb{I}\boldsymbol{\omega}\right)^{\mathbf{\dot{}}}=g_n\mathbf{a}\times\hat{z},\\ \mathbf{\dot{\mathbf{\hat{3}}}}=\boldsymbol{\omega}\times\mathbf{\hat{3}},\\m\mathbf{\ddot{r}}=0.\end{array}.
\end{equation}
When written in Euler angles, equations \eqref{EOM_gLT_red} turns into three equations of motion \eqref{Euler_gLT_1}--\eqref{Euler_gLT_3} and two constraint equations \eqref{Euler_gLT_4}, \eqref{Euler_gLT_5}:
\begin{align}
\label{Euler_gLT_1}& -I_1\ddot{\varphi}\sin\theta-2I_1\dot{\theta}\dot{\varphi}\cos\theta+I_3\dot{\theta}\omega_3=0,\\
\label{Euler_gLT_2}& I_1\ddot{\theta}-I_1\dot{\varphi}^2\sin\theta\cos\theta+I_3\omega_3\dot{\varphi}\sin\theta=lg_n\sin\theta,\\
\label{Euler_gLT_3}& I_3\dot{\omega}_3=0,\\
&\label{Euler_gLT_4} ml(\ddot{\theta}\cos\theta-\sin\theta(\dot{\theta}^2+\dot{\varphi}^2))=0,\\
&\label{Euler_gLT_5} ml(-\ddot{\varphi}\sin\theta-2\dot{\varphi}\dot{\theta}\cos\theta)=0,
\end{align}
where $g_n$ is
\begin{equation*}
g_n=\frac{mgI_1-mlI_1(\cos\theta(\dot{\theta}^2+\dot{\varphi}^2\sin^2\theta)-I_3\omega_3\dot{\varphi}\sin^2\theta)}{I_{1}+ml^2\sin^2\theta}.
\end{equation*}
We substitute the equations of motion into the constraint equations to get the following conditions:
\begin{align}
\label{gLT_constr_1}&\sin\theta(I_1(\dot{\theta}^2+\dot{\varphi}^2\sin^2\theta)+I_3\omega_3\dot{\varphi}\cos\theta-lg_n\cos\theta)=0,\\
\label{gLT_constr_2}&I_3\omega_3\dot{\theta}=0.
\end{align}
These conditions determine the admissible types of solutions to the system \eqref{EOM_gLT_red}. We show that the constraints imply that $\sin\theta=0$.
 The equations above hold if any factor is equal to zero. Suppose $\omega_3=0$ in \eqref{gLT_constr_2}. Then either $\sin\theta=0$ in \eqref{gLT_constr_1} or
\begin{equation*}
I_1(\dot{\theta}^2+\dot{\varphi}^2\sin^2\theta)=lg_n\cos\theta,
\end{equation*}
where
\begin{equation*}
g_n=\frac{mgI_1-mlI_1\cos\theta(\dot{\theta}^2+\dot{\varphi}^2\sin^2\theta)}{I_1+ml^2\sin^2\theta}.
\end{equation*}
We can see that $(\dot{\theta}^2+\dot{\varphi}^2\sin^2\theta)\cos^2\theta$ is constant since its derivative is:
\begin{align*}
&2(\ddot{\theta}\dot{\theta}+\ddot{\varphi}\dot{\varphi}\sin^2\theta+\dot{\varphi}^2\dot{\theta}\sin\theta\cos\theta)\cos^2\theta-2\dot{\theta}\cos\theta\sin\theta(\dot{\theta}^2+\dot{\varphi}^2\sin^2\theta)\\
&=2(\dot{\theta}\dot{\varphi}^2\cos\theta\sin\theta+\frac{lg_n}{I_1}\dot{\theta}\sin\theta-2\dot{\varphi}^2\dot{\theta}\cos\theta\sin\theta+\dot{\varphi}^2\dot{\theta}\sin\theta\cos\theta)\cos^2\theta\\
&-2\dot{\theta}\cos\theta\sin\theta\frac{lg_n}{I_1}\cos\theta=0.
\end{align*}
Call this constant $C$. If we use this constant in the equations above we get
\begin{equation*}
C=\frac{lg_n}{I_1}\cos^3\theta,\qquad
g_n\cos\theta=\dfrac{mgI_1\cos\theta-mlI_1 C}{I_1+ml^2\sin^2\theta}.
\end{equation*}
By eliminating $g_n$ we get a polynomial equation with real coefficients for the unknown $\cos\theta$:
\begin{equation*}
\cos^2\theta(I_1C+2ml^2C-ml^2C\cos^2\theta-mgl\cos\theta)=0.
\end{equation*}
It has at least one real solution. Thus $\cos\theta$ is constant and $\dot{\theta}=0$. If $\omega_3\neq 0$ then by \eqref{gLT_constr_2} again $\dot{\theta}=0$.
 For solutions to \eqref{Euler_gLT_1}-\eqref{Euler_gLT_3} with the constraints \eqref{gLT_constr_1} and \eqref{gLT_constr_2} we have found that $\theta$ is constant. We now show that the only solutions allowed by this system are the upright and inverted spinning gLT, solutions such that $\theta=0$ or $\theta=\pi$. 
 Suppose that $\theta\in(0,\pi)$ (so $\sin\theta\neq 0$). The first constraint equation \eqref{gLT_constr_1} gives 
\begin{equation}
\label{constr_theta_const}I_1\dot{\varphi}^2\sin^2\theta+I_3\omega_3\dot{\varphi}\cos\theta-lg_n\cos\theta=0.
\end{equation}
We have from equation \eqref{Euler_gLT_1} that $\ddot{\varphi}=0$, so that $\dot{\varphi}$ is constant. From equation \eqref{Euler_gLT_2} we get
\begin{equation*}
I_3\omega_3\dot{\varphi}-I_1\dot{\varphi}^2\cos\theta=lg_n,
\end{equation*}
and together with \eqref{constr_theta_const} we obtain
\begin{equation*}
I_1\dot{\varphi}^2=0.
\end{equation*}
So $\dot{\varphi}=0$. But this means in equation \eqref{Euler_gLT_2} that $g_n\sin\theta=0$ which contradicts the assumption $\sin\theta\neq0$. We conclude that for the asymptotic solutions to the gliding LT system we have either $\theta=0$ or $\theta=\pi$. For these solutions we have $g_n=mg$.
\end{proof}
\begin{lemma}
There are no solutions of the system \eqref{gLT_EoM} satisfying $g_n(\mathbf{y})=0$ and $M=\{\mathbf{y}=(\mathbf{\dot{r}},\mathbf{L},\mathbf{\hat{3}}):\;\mathbf{v}_A=0\}=\{\mathbf{y}_0,\mathbf{y}_\pi\}$ is the largest invariant set in $\Omega$.
\end{lemma}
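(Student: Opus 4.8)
The plan is to pin down the set where $\dot E$ vanishes, split it into two branches, show that one branch carries no trajectory, and invoke the preceding lemma on the other. Since $\mu>0$ and we only consider mechanical solutions ($g_n\ge 0$), the identity $\dot E=-\mu g_n|\mathbf v_A|^2=0$ holds exactly when $g_n=0$ or $\mathbf v_A=0$, so $M=\{g_n=0\}\cup\{\mathbf v_A=0\}$. The spinning states $\mathbf y_{0}$ and $\mathbf y_{\pi}$ are equilibria of \eqref{gLT_EoM} lying in $\{\mathbf v_A=0\}\subset M$, hence are themselves invariant and are contained in the largest invariant set; so it remains to show that no \emph{other} whole solution stays in $M$ for all $t\ge 0$.

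First I would eliminate the branch $g_n\equiv 0$. If $g_n(\mathbf y(t))=0$ on a forward time interval, then $\mathbf F=g_n\hat z-\mu g_n\mathbf v_A=0$ there, so by \eqref{gLT_EoM} the angular momentum $\mathbf L$ is constant and the centre of mass is in free fall, $m\mathbf{\ddot{s}}=-mg\hat{z}$; equivalently, the second derivative of the contact criterion $\tfrac{g_n}{m}-g+l(\ddot\theta\sin\theta+\dot\theta^2\cos\theta)=0$ reduces to $\ddot s_{\hat z}=-g$. But the constraint forces $s_{\hat z}=l\,\mathbf{\hat{3}}\cdot\hat{z}=l\cos\theta\in[-l,l]$, a bounded quantity, whereas $s_{\hat z}(t)=s_{\hat z}(0)+\dot s_{\hat z}(0)\,t-\tfrac{g}{2}t^2\to-\infty$. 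Hence $g_n$ cannot vanish on an unbounded forward interval; in particular there is no solution with $g_n\equiv 0$.

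Next I would combine this with the preceding lemma. Let $\mathbf y(t)$, $t\ge 0$, be a solution contained in $M$, and set $Z=\{t\ge 0:\mathbf v_A(\mathbf y(t))=0\}$, which is closed since $\mathbf v_A$ is continuous along the trajectory (and, for mechanical solutions, so is $g_n$). If $Z$ has empty interior, then its complement is dense, $g_n=0$ there, and continuity forces $g_n\equiv 0$ — excluded above. Otherwise $\mathbf v_A\equiv 0$ on some open interval $J$; then $\dot{\mathbf v}_A\equiv 0$ on $J$, the reduced system \eqref{EOM_gLT_red} together with its constraint consequences holds on $J$, and the computation of the preceding lemma applies verbatim, forcing $\theta\equiv 0$ or $\theta\equiv\pi$ there. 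Thus on $J$ the state coincides with the equilibrium $\mathbf y_0$ or $\mathbf y_\pi$, and by uniqueness of solutions the whole trajectory is that equilibrium. Consequently the largest invariant set inside $M$ is exactly $\{\mathbf y_0,\mathbf y_\pi\}$, and the LaSalle theorem then gives that every solution starting in $\Omega$ tends to $\{\mathbf y_0,\mathbf y_\pi\}$ as $t\to\infty$.

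The step I expect to be the main obstacle is not the free-fall contradiction but making this last dichotomy rigorous. Two points need care. First, one must know that $g_n$ is a genuine continuous function of the phase point along admissible trajectories, so that ``$g_n=0$ on a dense set of times'' upgrades to ``$g_n\equiv 0$''; this rests on the standing assumption $g_n\ge 0$, which keeps the denominator in \eqref{gn_gLT} from vanishing. Second, one must check that the final contradiction in the preceding lemma does not quietly rely on assuming $g_n>0$: it does not, because once $\dot\theta=\dot\varphi=0$ the formula for $g_n$ itself returns the strictly positive value $mgI_1/(I_1+ml^2\sin^2\theta)$, which is incompatible with the relation $lg_n\sin\theta=0$ coming from \eqref{Euler_gLT_2}. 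One also uses that, by compactness of $\Omega$ and the Lipschitz hypothesis on $\mathbf Y$, solutions starting in $\Omega$ exist for all $t\ge 0$, so the invariant-set argument is legitimate.
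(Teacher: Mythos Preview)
Your argument is correct, but the route you take for the branch $g_n\equiv 0$ differs from the paper's. The paper does not invoke free fall; instead it notes that $g_n=0$ forces $\dot{\mathbf L}=0$, reads the vanishing of the numerator in \eqref{gn_gLT} as the algebraic identity $\tfrac{gI_1^2}{l}+(\mathbf L\cdot\hat z)L_{\hat 3}=(\hat 3\cdot\hat z)|\mathbf L|^2$, and obtains the contradiction $\tfrac{gI_1^2}{l}=0$ by a short case split on whether $\mathbf L=0$, whether $L_{\hat x},L_{\hat y}$ vanish, and whether $\hat 3$ is constant. That computation is local in time --- it already collapses on any interval on which $g_n=0$ --- whereas your free-fall contradiction needs an unbounded forward interval; in exchange, your version is more transparent and avoids the case analysis entirely. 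Your handling of the ``mixed'' possibility, via the dichotomy on whether $Z=\{t:\mathbf v_A=0\}$ has interior, is in fact more careful than the paper's proof, which passes directly from ``no solution with $g_n\equiv 0$'' and the preceding lemma to the conclusion without explicitly ruling out trajectories that could alternate between the two branches of $M$.
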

\begin{proof}
Assume that $\mathbf{\tilde{y}}(t)$ is a solution of the gLT equations \eqref{gLT_EoM} such that $g_n(\mathbf{\tilde{y}})=0$ and derive a contradiction. If $g_n=0$ in this system we have that $\mathbf{L}$ is constant and that the numerator in the formula \eqref{gn_gLT} for $g_n$ vanishes:
\begin{equation}
\label{gn_is_zero}\frac{gI_{1}^2}{l}+\mathbf{L}\cdot\hat{z}L_{\mathbf{\hat{3}}}=\mathbf{\hat{3}}_{\hat{z}}L^2.
\end{equation}
If $\mathbf{L}=0$ then the equation reads $\frac{gI_{1}^2}{l}=0$, which is clearly false. If $\mathbf{L}\neq 0$ then $\mathbf{\hat{3}}_{\hat{z}}$ is constant and we can find the other components of $\mathbf{\hat{3}}$ from
\begin{equation*}
\begin{array}{l} 1=\mathbf{\hat{3}}^2,\\ 0=\mathbf{\dot{\mathbf{\hat{3}}}}\cdot\hat{z}=\frac{1}{I_{1}}(L_{\hat{x}}\mathbf{\hat{3}}_{\hat{y}}-L_{\hat{y}}\mathbf{\hat{3}}_{\hat{x}}).\end{array}
\end{equation*}
Either $L_{\hat{x}}$ and $L_{\hat{y}}$ are both zero or both nonzero. In the first case we get $\mathbf{L}=\mathbf{L}\cdot\hat{z}\mathbf{\hat{3}}_{\hat{z}}$, which means for \eqref{gn_is_zero} that $\frac{gI_{1}^2}{l}=0$, which is a contradiction. If $L_{\hat{x}}\neq 0$ and $L_{\hat{y}}\neq 0$ we can solve the system above, which means that $\mathbf{\hat{3}}$ is constant. But then we can write $\mathbf{L}=L_{\mathbf{\hat{3}}}\mathbf{\hat{3}}$ and we find again for \eqref{gn_is_zero} the contradiction $\frac{gI_{1}^2}{l}=0$.
Since we cannot find any solutions $\mathbf{\tilde{y}}(t)$ to the system \eqref{gLT_EoM} such that $g_n(\mathbf{\tilde{y}}(t))=0$ and $M$ is the invariant manifold of solutions to this system such that $\mathbf{v}_{A}=0$, the set $M$ is the largest invariant set in $\Omega$.
\end{proof}
With this lemma we can give a proof of the asymptotic behaviour of trajectories of the gLT system.
\begin{proposition}
Every solution $\mathbf{\tilde{y}}(t)$ of the gLT equations \eqref{gLT_EoM} satisfying the assumption $g_n(t)\geq 0$, $t\geq 0$ goes asymptotically to exactly one of the solutions $\mathbf{y}_{0,\pi}=(\theta=0,\pi;\theta=0,\dot{\varphi}=\text{const},\omega_3=\text{const},\nu_x=0,\nu_y=0)$ .
\end{proposition}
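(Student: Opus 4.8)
\section*{Proof proposal for the Proposition}

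The plan is to run the LaSalle theorem with the energy $E$ as the scalar function on the compact positively invariant set $\Omega$ constructed above, identify the resulting limit set by means of the two preceding lemmas, and then promote ``approaches the set $\{\mathbf y_0,\mathbf y_\pi\}$'' to ``converges to exactly one of $\mathbf y_0,\mathbf y_\pi$'' using connectedness of $\omega$-limit sets.

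First I would check the hypotheses of the LaSalle theorem for the given solution $\tilde{\mathbf y}(t)$. Taking $V=E$, the function $V$ is $C^1$ on $D$ and $\dot V=\mathbf F\cdot\mathbf v_A=-\mu g_n|\mathbf v_A|^2\le 0$ on the region $\{g_n\ge 0\}$, which contains $\tilde{\mathbf y}(t)$ for all $t\ge 0$ by the standing assumption; on that region the vector field $\mathbf Y$ is continuous and Lipschitz (the denominator of $g_n$ does not vanish there). The set $\Omega=\{E\le E(\mathbf y(0))+2mgl\}$ contains $\mathbf y(0)$; it is closed by continuity of $E$, and bounded because on the constraint manifold $-l\le\mathbf s\cdot\hat z\le l$ while the kinetic part $\tfrac12\dot{\mathbf r}^2+\tfrac12\boldsymbol\omega\cdot\mathbf L+\tfrac12\big(\tfrac1{I_1}(\mathbf L\times\mathbf{\hat{3}})\cdot\hat z\big)^2$ is a positive definite quadratic form in $(\dot{\mathbf r},\mathbf L)$ plus a nonnegative term, hence coercive, and $\mathbf{\hat{3}}\in S^2$ is automatically bounded; thus $\Omega$ is compact. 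Since $\dot E\le 0$ on $\Omega$ (here is where $g_n\ge 0$ is used), $E$ is nonincreasing along the flow, so $\Omega$ is positively invariant and $\tilde{\mathbf y}(t)$ stays in $\Omega$, hence exists for all $t\ge 0$.

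Next, LaSalle's theorem gives that $\tilde{\mathbf y}(t)$ approaches the largest invariant set $B$ contained in $M=\{\mathbf y\in\Omega:\ \dot E(\mathbf y)=0\}$. Since $\mu>0$ we have $M=\big(\{g_n=0\}\cup\{\mathbf v_A=0\}\big)\cap\Omega$. By Lemma~2 there is no orbit with $g_n\equiv 0$ and $\{\mathbf y_0,\mathbf y_\pi\}$ is the largest invariant set in $\Omega$; since $\{\mathbf y_0,\mathbf y_\pi\}\subseteq M$ (there $\mathbf v_A=0$ and $g_n=mg>0$) and $M\subseteq\Omega$, we conclude $B=\{\mathbf y_0,\mathbf y_\pi\}$ (Lemma~1 is what pins the set $\{\mathbf v_A=0\}$ down to these two solutions). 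Moreover $L_{\mathbf{\hat{3}}}=I_3\omega_3$ is conserved along $\tilde{\mathbf y}(t)$, and on that level set $\mathbf y_0$ and $\mathbf y_\pi$ are two distinct isolated points of $\mathbb R^2\times\mathbb R^3\times S^2$ (namely $\mathbf{\hat{3}}=\pm\hat z$, $\mathbf L=\pm I_3\omega_3\hat z$, $\dot{\mathbf r}=0$).

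Finally, to get convergence to a single one of them I would use that the $\omega$-limit set $\omega(\tilde{\mathbf y})$ of the bounded forward orbit $\tilde{\mathbf y}(t)$ is nonempty, compact, connected and invariant, hence $\omega(\tilde{\mathbf y})\subseteq B=\{\mathbf y_0,\mathbf y_\pi\}$; a connected subset of a two-point discrete set is a single point, so $\omega(\tilde{\mathbf y})=\{\mathbf y_0\}$ or $\{\mathbf y_\pi\}$, and $\tilde{\mathbf y}(t)$ converges to that point as $t\to\infty$. (Equivalently: $\mathrm{dist}(\tilde{\mathbf y}(t),\{\mathbf y_0,\mathbf y_\pi\})\to 0$, and by continuity of $\tilde{\mathbf y}$ the orbit cannot eventually pass between disjoint neighbourhoods of $\mathbf y_0$ and $\mathbf y_\pi$.) The step I expect to be the main obstacle is the honest verification that $\Omega$ is genuinely compact and positively invariant under the sole hypothesis $g_n(t)\ge 0$ — in particular the coercivity of $E$ on the constraint manifold and the absence of finite-time escape from $D$ — since everything after that is a clean application of LaSalle together with Lemmas~1 and~2.
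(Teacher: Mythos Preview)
Your proposal is correct and follows essentially the same route as the paper: energy $E$ as the LaSalle function on the compact positively invariant sublevel set $\Omega$, Lemmas~1 and~2 to identify the largest invariant subset of $\{\dot E=0\}$ as $\{\mathbf y_0,\mathbf y_\pi\}$, and then connectedness of the $\omega$-limit set to force convergence to a single one of them. The only cosmetic difference is that the paper rederives the key steps of LaSalle's argument in-line (showing $E$ has a limit along the orbit, hence $E$ is constant and $\dot E=0$ on $L^+_{\tilde{\mathbf y}}$), whereas you invoke the theorem as a black box; conversely, your explicit use of the conserved quantity $L_{\hat{\mathbf 3}}=I_3\omega_3$ to see $\mathbf y_0$ and $\mathbf y_\pi$ as two genuinely isolated points on the relevant level set is a clarification the paper leaves implicit.
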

\begin{proof}
We consider $\mathbf{\tilde{y}}(t)$ such that  $g_n(t)\geq 0$, $t\geq 0$ and let $L_{\mathbf{\tilde{y}}}^{+}$ be its positive limit set, i.e. the set of all limit points of $\mathbf{\tilde{y}}(t)$ for sequences $\{t_m\}$ such that $t_m\to\infty$ as $m\to\infty$.
As we have already mentioned, $E(\mathbf{\tilde{y}}(t))$ is a decreasing function of $t$, so $\mathbf{\tilde{y}}(t)$ is contained in the compact set $\Omega=\{\mathbf{\tilde{y}}(t)\in D: E(\mathbf{\tilde{y}}(t))\leq E(\mathbf{\tilde{y}}(0))\}$. We have $L_{\mathbf{\tilde{y}}}^{+}\subset \Omega$ since $\Omega$ is closed.
 The function $E(\mathbf{\tilde{y}}(t))$ has a limit $a$ as $t\to\infty$ since $E(\mathbf{\tilde{y}}(t))$ is continuous on the compact set $\Omega$. One can show that since $\mathbf{\tilde{y}}(t)$ is bounded on the compact set $\Omega$ the positive limit set $L_{\mathbf{\tilde{y}}}^{+}$ is nonempty, compact and invariant (see \cite{Kha}, appendix), so if $p\in L_{\mathbf{\tilde{y}}}^{+}$ then there is a sequence $\{t_m\}$ with $t_m\to\infty$ and $\mathbf{\tilde{y}}(t_m)\to p$ as $m\to\infty$. By the continuity of $E$, we have $E(p)=\lim_{m\to\infty}E(\mathbf{\tilde{y}}(t_m))=a$.
The energy $E$ is thus constant on $L_{\mathbf{\tilde{y}}}^{+}$ and since $L_{\mathbf{\tilde{y}}}^{+}$ is an invariant set, $\dot{E}=0$ on $L_{\mathbf{\tilde{y}}}^{+}$.
 By the previous lemma, $B$ is the largest invariant set in $\{\mathbf{\tilde{y}}\in D: \dot{E}(\mathbf{\tilde{y}})=0\}$, so it follows that $L_{\mathbf{\tilde{y}}}^{+}\subset B$. By inclusion, $\mathbf{\tilde{y}}(t)$ approaches $B$ as $t\to\infty$.
Since $B$ only contains two isolated solutions and $L_{\mathbf{\tilde{y}}}^{+}$ is connected, we see that the positive limit set must coincide with one of the solutions for \eqref{EOM_gLT_red}.  
\end{proof}
The statement of this proposition reflects the mechanical understanding that when $t\to\infty$ the energy decreases the gliding velocity $\mathbf{v}_A\to 0$ and the asymptotic solution is one of the stationary solutions of the classical LT.

It remains to investigate the relative stability of these asymptotic solutions.
The energy of the gliding HST can be rewritten as a sum of two terms:
\begin{align}
E=&\left(\frac{1}{2}m(\nu_{x}^2+\nu_{y}^2)+ml(\nu_x\dot{\theta}\cos\theta+\nu_y\dot{\varphi}\sin\theta)+\frac{1}{2}I_{1}^{*}\dot{\theta}^2\right)+\nonumber\\
&+\left(\frac{(\mathbf{L}\cdot\mathbf{\hat{3}})^2}{2I_3}+\frac{(\mathbf{L}_{A}\cdot\hat{z}(t)-\mathbf{L}\cdot\mathbf{\hat{3}}\cos\theta)^2}{2I_{1}^*(1-\cos^2\theta)}+mgl\cos\theta\right)\nonumber\\
=&E_{1}(\theta,\dot{\theta},\dot{\varphi},\nu_x,\nu_y)+E_{2}(\theta,\mathbf{L}_{A}\cdot\hat{z}(t),\mathbf{L}\cdot\mathbf{\hat{3}}).
\end{align}
The first function vanishes for the asymptotic solutions (since then $\mathbf{v}_A\to0$ and $\dot{\theta}=0$) the second function goes to
\begin{equation}
E_{2}(\theta,\mathbf{L}_{A}\cdot\hat{z},\mathbf{L}\cdot\mathbf{\hat{3}})=E_{2}(\theta)=\frac{(\mathbf{L}\cdot\mathbf{\hat{3}})^2}{2I_3}+\frac{(\mathbf{L}_{A}\cdot\hat{z}-\mathbf{L}\cdot\mathbf{\hat{3}}\cos\theta)^2}{2I_{1}^*(1-\cos^2\theta)}+mgl\cos\theta,
\end{equation}
because $\mathbf{L}_{A}\cdot\hat{z}$ is an integral of motion for the HST. The problem of checking the stability of the asymptotic solutions is reduced to examining the character of extremal values of $E_{2}(\theta)$ for the points $\theta=0$ and $\theta=\pi$. These solutions are asymptotically stable if $E_{2}^{\prime\prime}(\theta)>0$ for $\theta=0$ and $\theta=\pi$. But we have
\begin{equation}
E_{2}^{\prime\prime}(\theta)=-\cos\theta E_{2}^{\prime}(\cos\theta)+(1-\cos^{2}\theta)E_{2}^{\prime\prime}(\cos\theta),
\end{equation}
so we see that we only have to investigate $E_{2}^{\prime}(\cos\theta)$ for $\cos\theta=\pm 1$.
 We look first at the asymptotic solution $\cos\theta=1$. For this solution $\mathbf{L}_{A}\cdot\hat{z}=\mathbf{L}\cdot\mathbf{\hat{3}}=\mathbf{L}_{A}\cdot\mathbf{\hat{3}}$, so the derivative of $E_{2}(\cos\theta)$ is equal to
\begin{equation}
E_{2}^{\prime}(\cos\theta)=\frac{-(\mathbf{L}_{A}\cdot\mathbf{\hat{3}})^2}{I_{1}^{*}(1+\cos\theta)^2}+mgl.
\end{equation} 
We then get 
\begin{equation}
E_{2}^{\prime\prime}(\theta=0)>0 \Leftrightarrow E_{2}^{\prime}(\cos\theta=1)<0 \Leftrightarrow (\mathbf{L}_{A}\cdot\mathbf{\hat{3}})^2> 4mglI_{1}^{*}.
\end{equation}
Thus the upright spinning solution is stable if the angular momentum about the $\mathbf{\hat{3}}$-axis satisfies $|\mathbf{L}_{A}\cdot\mathbf{\hat{3}}|>2\sqrt{mglI_{1}^*}$.
 The second asymptotic solution is $\cos\theta=-1$. For this solution $\mathbf{L}_{A}\cdot\hat{z}=-\mathbf{L}_{A}\cdot\mathbf{\hat{3}}$, so the derivative of $E_{2}(\cos\theta)$ is
\begin{equation}
E_{2}^{\prime}(\cos\theta)=\frac{(\mathbf{L}_{A}\cdot\mathbf{\hat{3}})^2}{I_{1}^{*}(1-\cos\theta)^2}+mgl.
\end{equation}
From this we get
\begin{equation}
E_{2}^{\prime\prime}(\theta=\pi)>0\Leftrightarrow E_{2}^{\prime}(\cos\theta=-1)>0 \Leftrightarrow \frac{(\mathbf{L}_{A}\cdot\mathbf{\hat{3}})^2}{4I_{1}^*}+mgl>0.
\end{equation}
Clearly this inequality is always satisfied, so the upside-down spinning position is always stable. The conditions of stability for the asymptotic straight and inverted spinning solutions are the same as for the classic Lagrange top \cite{Arn}.

\bibliography{mybib}
\end{document}